\newtheorem {thm}{Theorem}[section]
\newtheorem {pr}[thm]{Proposition}
\newtheorem {lem}[thm]{Lemma}
\theoremstyle{definition}
\newtheorem{df}{Definition}
\theoremstyle{remark}
\newtheorem{rem}{Remark}
\numberwithin{equation}{section}
\newcommand{\al}{\alpha}
\newcommand{\la}{\lambda}
\newcommand{\re}{\mathbb{R}}
\newcommand{\om}{\omega}
\newcommand{\de}{\delta}
\newcommand{\rd}{\mathbb{R}^d}
\begin{document}

\title[On harmonic approximation ]
{  On  harmonic approximation of Lipschitz functions on  compacts in $\mathbb{R}^d$}

\author{Nikolai A.\ Shirokov}

\address{National Research University Higher School of Economics,
 Kantemirovskaya Street 3a, St. Petersburg, 194100, Russia}
\address{St.~Petersburg Department of V.A.~Steklov Mathematical Institute,
Fontanka 27, St.~Petersburg 191023, Russia}
\email{nikolai.shirokov@gmail.com}
\author{Andrei V.\ Vasin}

\address{Admiral Makarov State University of Maritime and Inland Shipping,
Dwinskaya Street 5/7, St.~Petersburg, 198255, Russia}
\email{andrejvasin@gmail.com}

\subjclass[2020]{Primary 41A30; Secondary 41A17;  41A63}
\keywords{Harmonic approximation, Jackson-Bernstein type theorem, Vitushkin localization,   porous sets}

\begin{abstract}
Given a porous compact $K \subset \mathbb{R}^d$ and a continuity modulus $\omega$, we  prove a quantitative Jackson-Bernstein type theorem on harmonic approximation.   That is, a   function $f$ belongs to the class $\mathrm{Lip}_{\omega}(K)$  if and only if $f$ can be approximated uniformly on $K$  with a rate of $\omega(\delta)$  by a function that is  harmonic   in the $\delta$-neighborhood of $K$, provided the uniform estimate $\omega(\delta)/\delta$ on the gradient holds.

\end{abstract}

\maketitle

\section{Introduction}

 \subsection{Background} The original Jackson-Bernstein theorem concerns the relations  between the smoothness of a function and its degree of approximation  by  trigonometric polynomials on the unit circle. Later, the similar problems  of   approximation by analytic polynomials  on the certain compacts in $\mathbb{C}$ were studied (see references in \cite{A4}). In \cite{A2}
  the  characterization by the polynomial approximation of the smoothness of functions  on a Jordan arc $L \subset \mathbb{C} $  involves the uniform
estimates of derivatives of polynomials in a   neighborhood of $L$. The  harmonic polynomials with the uniform estimates on the gradient
can also be used for a constructive description of the Lipschitz classes of functions on the
 Jordan arcs in  $\mathbb{C}$ \cite{A3}. We point out that in all cases the argument uses   conformal mapping techniques.

 Another approach  is developed in  \cite{ASh20} for  the Lipschitz spaces    $\mathrm{Lip}_{\om}(L)$ with  the Dini-regular modulus $\om$ of continuity  on    the  chord-arc curve $L\subset \mathbb{R}^3$. Instead of polynomials  of degree $n$,  the authors used  harmonic functions   in  the $\de$-neighborhood of  $L$  with the  estimates on the gradient.
 A sharp  theorem on relations between   the size $\de$  of a    neighborhood and the rate $\om(\de)$ of approximation is proved.
  In  \cite{P1}  the results were extended    to    the  compacts in $\rd$, which are  bi-Lipschitz images of the $d-2$-dimensional cube.

  The question in order is to extend the results  \cite{ASh20, P1} on harmonic approximation  either on compacts in $\rd$  of dimension greater than $ d- 2$, or for general  moduli of continuity, as well.  We do this work for the porous compacts, which have  Assouad dimension of $\dim _A K < d$ (\cite{L}, see details in Section 2.4). Particularly, the main result holds
   for the  Ahlfors-David $\theta$-regular compacts $K \subset \rd$, $\theta < d$. Also,  our Theorem \ref{t2}  holds for general moduli of continuity without any  Dini type restrictions.

 On the other hand,   Vitushkin type results  in the setting  of harmonic approximation  (see, e.g., \cite{M96, M12, M121})  are closely related to the problem considered here.
We  develop the  approach using the arguments  from   \cite{M12, M121,  P} with those in  \cite{ASh20},
and obtain a quantitative Jackson-Bernstein type  theorem on the relations of the smoothness and degree of approximation by harmonic functions.
\subsection{Main result}
Throughout this paper, we consider $\rd$ equipped with   the
$d$-dimensional Lebesgue  measure.    The Lebesgue  measure of $E$ is denoted by $|E|$, and if $x,\;y\; \in \rd$, then $ |x-y|$
denotes the Euclidean distance  from $x$ to  $y$. The Euclidean distance between arbitrary sets $A,\;B \subset \rd$ is denoted by $\mathrm{dist} (A,B) = \inf\{|x-y|:\; x\in A, \; y\in B\}$.
The open ball with the center $c \in \rd$ and radius $r>0$ is the set
\[B(x, r)= \{y \in \mathbb{R}^d :\; |x-y|<r \}.\]
In this paper, we only consider cubes which are half-open and have sides parallel to the
coordinate axes. That is, a cube in $\rd$  is a set of the form
\[Q = [a_1; b_1)\times \dots   \times [a_d; b_d)\]
with edge size
$\ell(Q) =b_1-a_1 = \dots = b_d-a_d$.
\begin{df}
    A bounded closed set  $K\subset\mathbb{R}^d$ is called  porous, if there  is a  constant $0< c <1$ such that for each ball $B\subset\mathbb{R}^d$
  there exists a ball $mB\subset B$  with the properties $mB\cap K=\emptyset$ and $|mB|\geq c|B|$.
\end{df}
\begin{rem}
  The basic example of a  porous set is   the  Ahlfors-David $\theta$-regular compact $K \subset \rd$, $0<\theta < d$ \cite[Lemma 2.1] {Mat},
  for which there exist positive
constants $C_1,\: C_2$ and $0<\theta<d$ such that for any $x \in K$ and $R>0$
\[
C_1 R^{\theta}\leq \mathcal{H}^{\theta}(K\cap B(x,R))     \leq C_2 R^{\theta},
\]
where $\mathcal{H}^{\theta}$ is  $\theta$-Hausdorff measure and $B(x,R)$ is a ball with the center $x$ and radius $R$.
\end{rem}
\begin{df}
 \label{df1}
  A continuous   increasing  concave function $\omega:[0,\infty)\rightarrow[0,\infty)$ such that $\omega(0)=0$
   is called a  modulus of continuity.  Clearly, the modulus $\om$ satisfies  the doubling property:  for each $\de>0$ it holds $\om(2\de) \leq 2 \om(\de)$.

   The Lipschitz space $\mathrm{Lip}_{\om}(K)$ consists of those functions continuous on $K$ such that the  semi-norm
   \[\|f\|_{\om}= \sup_{\substack { x,y \in K\\
  x\neq y}} \frac{|f(x)-f(y)|}{\om(|x-y|)}\]
   is finite.
               \end{df}
   Given $\de>0$, let
  \[K_{\delta}= \{x:\: \inf_{y \in K}  |x-y| <\de\}\]
   denote a $\de$- neighborhood of $K$, and let $\partial K_{\de}$ be its boundary.
 The main result  is a following Jackson-Bernstein type  theorem on approximation by harmonic functions.
\begin{thm}\label{t2}
 Let $K \subset \rd$, $d>2$, be a porous compact, and let $\omega (t)$ be a continuity modulus. Given a continuous function $f$  on $K$, then   $f \in \mathrm{Lip}_{\omega}(K)$ if and only if
  for any $\delta >0$ there is a harmonic function $\mathcal{G}_{\delta}$ in a $\delta$- neighborhood $K_{\delta}$ of $K$ such that
\begin{equation}\label{ef1}
  \sup_K |f-\mathcal{F}_{\delta}|\leq C_1 \|f\|_{\om}\; \omega(\delta),
\end{equation}
        and
        \begin{equation}\label{ef2}
 \sup_{K_{\de}}|\nabla \mathcal{F}_{\de}|\leq C_2 \|f\|_{\om}\frac{\om (\de)}{\de},
\end{equation}
  where the constants $C_1$  and $C_2$ are independent of $\de$ and $f$.
 \end{thm}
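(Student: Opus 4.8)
\emph{Plan.}
I would prove the two implications separately; the ``if'' part is elementary and the ``only if'' part is a Vitushkin-type localization.

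\textbf{The easy direction.}
Assume the functions $\mathcal G_\de$ exist with the stated bounds (if $f$ is not yet known to lie in $Lip_\om(K)$, one reads ``$\|f\|_\om$'' there as an arbitrary finite constant). For $x,y\in K$ put $t=|x-y|$; the segment $[x,y]$ lies in $K_{2t}$, so the fundamental theorem of calculus along it together with \eqref{ef1}--\eqref{ef2} (taken with $\de=2t$) and the doubling of $\om$ give
\[
|f(x)-f(y)|\le 2\sup_K|f-\mathcal G_{2t}|+|x-y|\sup_{K_{2t}}|\nabla\mathcal G_{2t}|\lesssim\om(2t)\lesssim\om(t),
\]
hence $f\in Lip_\om(K)$. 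For the converse, I first note two harmless reductions. By doubling it suffices to produce, for a small constant $c=c(d,K)\in(0,1)$ and every $\de>0$, a function harmonic in $K_{c\de}$ satisfying \eqref{ef1}--\eqref{ef2}: replacing $\de$ by $\de/c$ then yields the statement for $K_\de$ itself. One may also assume $\de\le\operatorname{diam}K$ (for larger $\de$ a suitable constant works) and, subtracting a constant from $f$, that $\|f\|_{C(K)}\lesssim\|f\|_\om$.

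\textbf{The construction.}
Extend $f$ to $\tilde f\in Lip_\om(\rd)$ of compact support with $\|\tilde f\|_{Lip_\om}\lesssim\|f\|_\om$ (McShane extension for the subadditive modulus $\om$, times a cut-off), and mollify: $f_\de=\tilde f*\ph^\de$, $\ph^\de(x)=\de^{-d}\ph(x/\de)$. Using $\tilde f\in Lip_\om$ and that $\ph^\de$ has integral $1$ while its first and second derivatives have integral $0$, one gets $\sup_K|f-f_\de|\lesssim\|f\|_\om\om(\de)$, $\|\nabla f_\de\|_\infty\lesssim\|f\|_\om\om(\de)/\de$, $\|\Delta f_\de\|_\infty\lesssim\|f\|_\om\om(\de)/\de^2$. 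Put $h:=\Delta f_\de$. Take the grid of cubes $\{Q_j\}$ of side $\de$, let $J$ index those meeting $K_{c\de}$, and fix a smooth partition of unity $\{\ph_j\}_{j\in J}$ with $\operatorname{supp}\ph_j\subset 2Q_j$, $\sum_{j\in J}\ph_j\equiv1$ on $K_{c\de}$, $|D^\al\ph_j|\lesssim\de^{-|\al|}$. For each $j\in J$, apply the porosity of $K$ to the ball circumscribing $Q_j$ and shrink the resulting $K$-free ball by the factor $\tfrac12$; this produces a ball $B_j\subset CQ_j$ of radius $\sim\de$ with $\operatorname{dist}(B_j,K)\ge c\de$ (which fixes $c$). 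With $q_j$ the centre of $B_j$, choose a correction $\mu_j$ supported in $B_j$, of the form $a_j\eta_j+\sum_i b_{j,i}(x-q_j)_i\eta_j$ with $\eta_j$ a standard bump on $B_j$, so that $\nu_j:=\ph_jh+\mu_j$ has vanishing zeroth and first moments about $q_j$. Estimating the zeroth and first moments of $\ph_jh=\ph_j\Delta f_\de$ by transferring the Laplacian onto $\ph_j$ (resp.\ onto $(x-q_j)_i\ph_j$) and using that the total integral of a Laplacian vanishes — so that the mean of $f_\de$ over $\operatorname{supp}\ph_j$ may be subtracted — gives $\|\nu_j\|_{L^1}\lesssim\|f\|_\om\om(\de)\de^{d-2}$ and $\|\nu_j\|_\infty\lesssim\|f\|_\om\om(\de)\de^{-2}$, with $\operatorname{supp}\nu_j\subset CQ_j$. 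Finally, with $\Phi(x)=c_d|x|^{2-d}$ the fundamental solution of $-\Delta$ on $\rd$ ($d>2$), set $T_j=\Phi*\nu_j$ and
\[
\mathcal G_\de:=f_\de-\sum_{j\in J}T_j .
\]
Since $K$ is compact and $\de$ fixed, $J$ is finite, so $\mathcal G_\de$ is a well-defined continuous function.

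\textbf{The verification.}
Distributionally $\Delta\mathcal G_\de=h-\sum_{j\in J}\nu_j=h-\big(\sum_{j\in J}\ph_j\big)h-\sum_{j\in J}\mu_j$, which vanishes on $K_{c\de}$ because $\sum\ph_j\equiv1$ there while each $\operatorname{supp}\mu_j\subset B_j$ is at distance $\ge c\de$ from $K$; hence $\mathcal G_\de$ is harmonic in $K_{c\de}$. For \eqref{ef1} and $x\in K$, write $|f(x)-\mathcal G_\de(x)|\le|f(x)-f_\de(x)|+\sum_{j\in J}|T_j(x)|$, split $J$ by $|x-x_j|\le A\de$ and $|x-x_j|>A\de$ ($x_j$ the centre of $Q_j$, $A$ a fixed constant), bound the near part ($\lesssim A^d$ terms) by $\|\nu_j\|_\infty\int_{CQ_j}|x-y|^{2-d}\,dy\lesssim\|f\|_\om\om(\de)$, and, for the far part, use the two vanishing moments of $\nu_j$ and $|D^2\Phi(z)|\lesssim|z|^{-d}$ to get $|T_j(x)|\lesssim|x-x_j|^{-d}\de^2\|\nu_j\|_{L^1}\lesssim|x-x_j|^{-d}\|f\|_\om\om(\de)\de^{d}$; grouping the cubes into dyadic shells $|x-x_j|\sim2^k\de$ and using that porosity ($\dim_AK<d$) bounds the number of such cubes by $\lesssim2^{k\theta}$ with $\theta<d$ makes the far part $\lesssim\|f\|_\om\om(\de)\sum_k2^{k(\theta-d)}\lesssim\|f\|_\om\om(\de)$. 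Estimate \eqref{ef2} on $K_{c\de}$ follows identically from $\nabla\mathcal G_\de=\nabla f_\de-\sum_j\nabla T_j$, the near bound $|\nabla T_j(x)|\lesssim\|\nu_j\|_\infty\de$, the far bound $|\nabla T_j(x)|\lesssim|x-x_j|^{-d-1}\de^{2}\|\nu_j\|_{L^1}$ (from $|D^3\Phi(z)|\lesssim|z|^{-d-1}$ and the vanishing moments), and the convergent series $\sum_k2^{k(\theta-d-1)}$; the covering bound persists for $x\in K_{c\de}$ since such $x$ lies within $\de$ of $K$.

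\textbf{The main obstacle.}
The crux is the exponent. A crude localization that leaves the pieces $\ph_jh$ uncorrected produces potentials decaying only like $|x-x_j|^{2-d}$, and the dyadic summation then requires $\dim_AK<d-2$, the regime of \cite{ASh20,P1}. To reach $\dim_AK<d$ one must make the localized pieces decay like $|x-x_j|^{-d}$, i.e.\ annihilate both the zeroth and the first moments — and this is precisely where porosity is genuinely used, furnishing in each cube a ball $B_j$ of comparable size into which the compensating mass $\mu_j$ is dumped. A second delicate point is that $\mathcal G_\de$ must be harmonic in a \emph{full} neighbourhood of $K$: this forces $\operatorname{dist}(B_j,K)\gtrsim\de$, obtained by shrinking the porosity ball, and hence the passage from $K_\de$ to $K_{c\de}$ made at the outset. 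Everything else — the extension, the mollification estimates, and the bookkeeping for the sizes of $a_j,b_{j,i}$ — is routine and uses only that $\om$ is increasing, concave and doubling.
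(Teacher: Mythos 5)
Your proof is correct and follows essentially the same route as the paper: Vitushkin localization at scale $\de$ over a grid of cubes, annihilation of the zeroth and first moments of each localized Laplacian by placing a compensator in a porosity ball inside the cube (so the error decays like $|x-x_j|^{-d}$ and the approximant is harmonic in a full neighbourhood of $K$), and summation over dyadic shells using the Assouad-dimension bound $\lambda<d$ on the number of cubes meeting $K$. The only cosmetic differences are that the paper realizes the moment-matching by keeping the first two terms of the multipole expansion of $\mathcal{E}$ about the centre of the porosity ball rather than by subtracting a smooth compensating density, and that it derives the gradient bound \eqref{ef2} a posteriori from \eqref{ef1} via the Poisson kernel rather than directly from the construction.
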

   The theorem generalizes either the results \cite{ASh20} where these problems are researched on   the chord-arc curves in $\mathbb{R}^3$, or  the results \cite{P1} for the  bi-Lipschitz image  in $\rd$ of the $d-2$-dimensional cube.

\begin{rem}
  Observe that  the sufficiency condition of Theorem \ref{t2} does not appeal to the class of harmonic functions.
      If for each  rather small $\de >0 $ one can find any $C^1$-smooth function in $K_{\de}$ with properties (\ref{ef1}) and  (\ref{ef2}) then, obviously, $f \in \mathrm{Lip}_{\om} (K)$. Thus,  we will prove the necessity condition of Theorem \ref{t2}.
\end{rem}

\subsection{Organization and notations}
In Section 2 the preliminaries concerning the Whitney extension, the dyadic decomposition and the properties of porous sets are given. Also, here we reduce Theorem \ref{t2} to the crucial Proposition \ref{pr1}. In  Section 3 we modify the Vitushkin approach and  prove Proposition \ref{pr1}.

As usual, the letter $C$ denotes a constant, which may be different at each
occurrence and which depends on the specified indices under consideration
Also, if  $A/C\leq  B\leq C A,$ then we write $A\approx B$.

\section{Preliminaries}

\subsection{Whitney extension}

From the point of proof, it is convenient to consider  the function $f$ extended from $K$ to $\rd$. This is done by the Whitney extension   of the function
   $f \in \mathrm{Lip}_{\om}(K)$  to  $\rd$ (see Stein \cite[2.2, Ch.6 ]{S}).
   \begin{enumerate}
             \item  $\widetilde{f}=f$ on $K$.
              \item $\widetilde{f}$ has the compact support $supp\; \widetilde{f}\subset R_0$, where $R_0$ is a cube with edge size $\ell(R_0) >1$.
              \item $\widetilde{f} \in C^\infty(\rd \setminus K)\cap \mathrm{Lip}_\om(\rd)$, and there is a  constant  $C >0$ depending on $K$ such that
              \[\|\widetilde{f}\|_{\om}\leq C \|f\|_{\om}.\]

              \item There is a sequence of constants  $C_k >0$,   $k\in \mathbb{N}$, depending on $K$  such that
                  \[|\nabla^k\widetilde{f}(x)|\leq C_k \|f\|_{\om} \frac{\om (\textrm{dist}(x, K))}{\textrm{dist}^k(x, K)},\quad x \in \rd \setminus K.\]
                               \end{enumerate}
                                 Thus, in what follows we assume that $f$ is extended by Whitney to $\rd$ such that   the properties (1)--(4) take place. We  denote  the  Whitney extension $\widetilde{f}$ by the same symbol $f$.
\subsection{Regularization}
 Consider the $C^{\infty}$-regularization of the Whitney extension  $f$ from Section 2.1. For this choose  any function
 $\phi \in C^{\infty}_0(\rd)$ with the support   in the ball $B(0,1)$ centered in origin and radius $1$, and such that $\int_{\rd}\phi(t)dt=1$. Then, a  function
     \begin{equation}\label{ereg}
       f_{\varepsilon}(x)= f\ast \phi_{\varepsilon}(x)=\frac{1}{\varepsilon^d}\int_{\rd}f(x-t)\phi(t/\varepsilon)dt,
     \end{equation}
    provided $\varepsilon < 1/4$,           is the  claimed  $C^{\infty}$-smooth regularization
           with the compact support in the cube
           $ R_0$. For each $\varepsilon >0$ we have
           \[
     \sup_{x \in \rd} |f_{\varepsilon}(x)-f(x)|\leq  \|f\|_{\om}\;\om (\varepsilon),
      \]
             and
             \begin{equation}\label{ereg1}
             \|f_{\varepsilon}\|_{\om} \leq \|f\|_{\om}.
              \end{equation}
              For an integer $k>0$
    we easily  obtain the uniform  estimate on the gradient of the regularization function  $f_{\varepsilon}$
             \[
      |\nabla ^k f_{\varepsilon}(x)| \leq C_k \|f\|_{\om} \frac{\om (\textrm{dist}(x, K))}{\textrm{dist}^k(x, K)}
      \]
with the constant $C_k>0$ independent of  $x \in \rd\backslash K$, $f$ and $\varepsilon > 0$.

Indeed, for  $\varepsilon \leq \textrm{dist}(x ,  K) /2$ we  directly differentiate (\ref{ereg}) with respect to $x$ and apply   the gradient estimate  (4) of the  Whitney extension of $f$ and property (\ref{ereg1}).
If
$\varepsilon > \textrm{dist}(x ,  K) /2$, then choosing any $x_0 \in K$ such that $|x-x_0| = \textrm{dist}(x ,  K)$, we obtain
\[
\aligned
\big |\nabla ^k f_{\varepsilon}(x)\big |
= \big |\nabla ^k f\ast \phi_{\varepsilon}(x)\big |
&= \big |\big(f -f(x_0)\big) \ast \nabla ^k \phi_{\varepsilon}(x)\big |\\
&\leq C_k \|f\|_{\om} \frac{\om (\varepsilon)}{\varepsilon ^k}
\leq C_k \|f\|_{\om} \frac{\om (\textrm{dist}(x, K))}{\textrm{dist}^k(x, K)},
\endaligned
\]
where in the last line  the concavity of $\om$ is applied. This completes the gradient estimate of $f_{\varepsilon}$.
\subsection{Reduction of proof}
By the regularization  from Section 2.3, we will prove the necessity part assuming $f \in C^{\infty}_0(\rd)$. The next strengthened form of estimate (\ref{ef1}) holds.
   \begin{pr}\label{pr1}
     Given a  cube $R_0$, a porous compact $K  \subset R_0$,     a continuity modulus $\omega (t)$ and  a function   $f \in C^{\infty}_0(\rd)$ with   $supp\; f \subset R_0$.   Then, for each $0<\de<  1/4$  there exists a harmonic function $\mathcal{F}_{\delta}$  in  the   neighborhood $K_{\de}$ such that
     \begin{equation}\label{ef11}
  \sup_{K_{\de}} |f-\mathcal{F}_{\delta}|\leq C \|f\|_{\om}\; \omega(\delta),
\end{equation}
  where the constant $C>0$ is independent of $f$ and $\de$.
   \end{pr}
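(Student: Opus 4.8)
The plan is to run a Vitushkin‑type localization of $f$ at scale $\de$, to leave untouched the localized pieces that sit away from $K$, and to replace the pieces near $K$ by harmonic potentials concentrated in porosity holes. Throughout, $A\lesssim B$ abbreviates $A\le CB$ with $C=C(K,d,N)$. First I would reduce to producing, for a fixed $c=c(K,d)\in(0,1)$ and every small $\de$, a function $\mathcal{G}_\de$ harmonic in $K_{c\de}$ with $\sup_{K_{c\de}}|f-\mathcal{G}_\de|\le C\|f\|_\om\,\om(\de)$; rescaling $\de\mapsto\de/c$ at the outset and using the doubling of $\om$ (Definition \ref{df1}) then yields the assertion for the neighbourhood $K_\de$. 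Since $f\in C^\infty_0(\rd)$ and $d>2$, write $f=\mathcal{E}*\Delta f$, where $\mathcal{E}(x)=c_d|x|^{2-d}$ is the normalized Newtonian kernel. Fix a covering of $\rd$ by dyadic cubes that is a Whitney decomposition of $\rd\setminus K$ \emph{truncated at the scale} $\de$: every Whitney cube of side $<\de$ is replaced by cubes of side $\approx\de$, so the cubes meeting $K_{C\de}$ (the \emph{near} cubes) have side $\approx\de$ while the remaining (\emph{far}) cubes satisfy $\mathrm{dist}(Q,K)\approx\ell(Q)\ge\de$; the covering has bounded overlap. Let $\{\varphi_Q\}$ be a subordinate partition of unity with $\sum_Q\varphi_Q\equiv1$ on a neighbourhood of $\overline{\mathrm{supp}\,\Delta f}$, $|\nabla^j\varphi_Q|\lesssim\ell(Q)^{-j}$, $S_Q:=\mathrm{supp}\,\varphi_Q$, and put $V_Q:=\mathcal{E}*(\varphi_Q\Delta f)$. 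Then $\varphi_Q\Delta f=0$ for all but finitely many $Q$, so $f=\sum_Q V_Q$, and each $V_Q$ is harmonic in $\rd\setminus S_Q$.

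For a far cube $Q$ the set $S_Q$ lies in a fixed dilate of $Q$, hence at distance $\ge c\de$ from $K$, so $\sum_{Q\ \mathrm{far}}V_Q$ is already harmonic in $K_{c\de}$ and is kept. For a near cube $Q$ I would invoke the porosity of $K$ for a ball of radius $A\de$ (with $A=A(c,d)$ suitably large) centred near $Q$: it produces a ball $B_Q^*$ with $\mathrm{rad}(B_Q^*)\approx\de$ and $3B_Q^*\cap K=\emptyset$, so $\mathrm{dist}(\tfrac12 B_Q^*,K)\ge c\de$. I would then replace $V_Q$ by a function $h_Q$ that is harmonic in $\rd\setminus\overline{\tfrac12 B_Q^*}$ (hence in $K_{c\de}$), vanishes at infinity, has the same multipole moments as $V_Q$ through a fixed order $N$, and satisfies $\|h_Q\|_{L^\infty(\rd)}\lesssim \mathcal{M}_Q$, where
\[\mathcal{M}_Q:=\sup\{\,|V_Q(x)|:\ \mathrm{dist}(x,S_Q\cup B_Q^*)\le\de\,\}.\]
Such $h_Q$ is obtained, as in the Vitushkin scheme, by placing on $\tfrac12 B_Q^*$ a charge equal to a suitable combination of derivatives of one fixed bump, with coefficients the low‑order moments of $\varphi_Q\Delta f$ and $L^\infty$ control via the maximum principle from $\mathcal{M}_Q$. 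I fix $N$ with $N>\la-d+1$, possible since $\la<d$ by Lemma \ref{l_cov}; in fact $N=1$ works. Setting $\mathcal{G}_\de:=\sum_{Q\ \mathrm{far}}V_Q+\sum_{Q\ \mathrm{near}}h_Q$, which is harmonic in $K_{c\de}$, we get on $K_{c\de}$
\[f-\mathcal{G}_\de=\sum_{Q\ \mathrm{near}}\bigl(V_Q-h_Q\bigr).\]

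To bound this at $x\in K_{c\de}$: $V_Q-h_Q$ is harmonic off a ball of radius $\approx\de$ containing $S_Q\cup B_Q^*$, vanishes at infinity, and has vanishing moments through order $N$, so the maximum principle gives $|V_Q(x)-h_Q(x)|\lesssim(\de/(\de+\mathrm{dist}(x,S_Q)))^{\,d-1+N}\mathcal{M}_Q$. Grouping the near cubes by $\mathrm{dist}(x,Q)\approx 2^j\de$, Lemma \ref{l_cov} bounds the number of cubes in the $j$‑th group by $\lesssim 2^{j\la}$, while the finitely many cubes with $\mathrm{dist}(x,S_Q)\lesssim\de$ are treated by $|V_Q(x)-h_Q(x)|\le|V_Q(x)|+|h_Q(x)|\lesssim\mathcal{M}_Q$. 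Hence
\[|f(x)-\mathcal{G}_\de(x)|\ \lesssim\ \Bigl(\sup_{Q\ \mathrm{near}}\mathcal{M}_Q\Bigr)\sum_{j\ge0}2^{j\la}\,2^{-j(d-1+N)}\ \lesssim\ \sup_{Q\ \mathrm{near}}\mathcal{M}_Q,\]
the series converging because $d-1+N-\la>0$. Everything thus reduces to the estimate $\mathcal{M}_Q\lesssim\|f\|_\om\,\om(\de)$ for near cubes.

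The hard part is this last estimate, and it is where $\dim_A K<d$ and ``$\om$ arbitrary'' are genuinely needed. Writing $V_Q=\varphi_Q f-\mathcal{E}*(2\nabla\varphi_Q\!\cdot\!\nabla f+f\,\Delta\varphi_Q)$ and, since $V_Q$ depends only on $\Delta f$, replacing $f$ by $f-c_Q$ with $c_Q:=f(p_Q)$ for some $p_Q\in K$ within $C\de$ of $Q$, one has $|f-c_Q|\lesssim\|f\|_\om\,\om(\de)$ on $S_Q$; then the contributions of $\varphi_Q(f-c_Q)$ and of $\mathcal{E}*((f-c_Q)\Delta\varphi_Q)$ to $\mathcal{M}_Q$ are $\lesssim\|f\|_\om\,\om(\de)$ via the crude bound $\int_{S_Q}|x-y|^{2-d}\,dy\lesssim\de^2$. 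The delicate term is $\mathcal{E}*(2\nabla\varphi_Q\!\cdot\!\nabla f)$: when $Q$ meets $K$, $|\nabla f(y)|\lesssim\|f\|_\om\,\om(\mathrm{dist}(y,K))/\mathrm{dist}(y,K)$ is unbounded and $\int_{S_Q}|\Delta f|$ is \emph{not} comparable to $\|f\|_\om\,\om(\de)\,\de^{d-2}$ once $\la\ge d-2$, so $\mathcal{M}_Q$ cannot be read off from the mass of the localized charge. The route I would take is to split $S_Q$ into the dyadic annuli $\{y:2^{-k-1}\de\le\mathrm{dist}(y,K)<2^{-k}\de\}$, each of Lebesgue measure $\lesssim 2^{k\la}(2^{-k}\de)^{d}$ by Lemma \ref{l_cov}, together with an innermost layer governed by the smoothing scale of $f$, to estimate each annular contribution, and to recover the summability in $k$ and the target rate $\om(\de)$ (with \emph{no} Dini restriction on $\om$) from the refined porous geometry and the moment cancellation built into $\mathcal{G}_\de$ — the admissible order $N$ being finite precisely because $\la<d$. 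Carrying this estimate through, uniformly over all porous compacts and all moduli of continuity, is the main technical obstacle of the proof.
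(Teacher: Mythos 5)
Your architecture matches the paper's: Vitushkin localization $V_Q=\mathcal{E}\ast(\varphi_Q\Delta f)$, keeping the pieces supported away from $K$, replacing each near piece by a multipole of order $N=1$ placed in a porosity hole, and summing the tails with Lemma~\ref{l_cov}, the series converging because $\la<d$. But there is a genuine gap exactly where you say the "main technical obstacle" lies, namely the estimate $\mathcal{M}_Q\lesssim\|f\|_{\om}\,\om(\de)$ (together with the analogous bound on the matched moments), and the route you sketch for it would not close. If you bound $\mathcal{E}\ast(2\nabla\varphi_Q\!\cdot\!\nabla f)$ using the pointwise bound $|\nabla f(y)|\lesssim\|f\|_{\om}\,\om(\mathrm{dist}(y,K))/\mathrm{dist}(y,K)$ over the annuli $\{2^{-k-1}\de\le\mathrm{dist}(y,K)<2^{-k}\de\}$ of measure $\lesssim 2^{k\la}(2^{-k}\de)^d$, the $k$-th contribution is of order $2^{k(\la-d+1)}\om(2^{-k}\de)$ up to kernel factors, and since $\om(2^{-k}\de)$ need not decay faster than any power, the sum diverges whenever $\la\ge d-1$ --- precisely the new range the theorem is meant to cover. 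The moment cancellation you appeal to cannot rescue this, because it acts on the expansion of $V_Q-h_Q$ at the pole, not on the integral defining $\mathcal{M}_Q$ itself.

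The correct and much simpler fix, which is the paper's Lemma~\ref{lloc1}, is never to differentiate $f$ at all. Writing $\phi_Q\Delta f=\Delta\bigl(\phi_Q(f-f(t))\bigr)+(f-f(t))\Delta\phi_Q-2\sum_i\partial_{y_i}\bigl((f-f(t))\partial_{y_i}\phi_Q\bigr)$ and, in the last group, integrating by parts once more so the derivative falls on the kernel $\mathcal{E}$, one gets
\[
|V_{\phi_Q}f(t)|\;\lesssim\;\|f\|_{\om}\,\frac{\om(\ell)}{\ell^{2}}\int_{2Q}\frac{dy}{|t-y|^{d-2}}\;+\;\|f\|_{\om}\,\frac{\om(\ell)}{\ell}\int_{2Q}\frac{dy}{|t-y|^{d-1}}\;\lesssim\;\|f\|_{\om}\,\om(\ell),
\]
using only $|f(y)-f(t)|\le\|f\|_{\om}\,\om(|y-t|)$; the maximum principle then extends the bound from $2Q$ to all of $\rd$. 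The same integration by parts gives $|C_{\al,Q}|\lesssim\frac{1}{\al!}\|f\|_{\om}\,\om(\ell)\,\ell^{\,d+|\al|-2}$ for the moments. Thus the porous geometry is needed only where you already use it correctly (locating the poles and counting cubes via Lemma~\ref{l_cov}); the localization estimate itself is geometry-free and places no Dini-type restriction on $\om$. As written, your proof delegates its central inequality to an argument that fails for $\la$ close to $d$, so it is incomplete.
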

  Before proving  Proposition \ref{pr1}, observe that the necessity part of Theorem \ref{t2} easily follows.
   Indeed, by the triangle inequality and by the regularization from Section 2.3, estimate (\ref{ef1}) is the direct corollary of (\ref{ef11}).

  The gradient estimate  (\ref{ef2})  modulo regularization is proved as follows.  After redesignation, we may assume that for each $\de>0$ there is the harmonic function $\mathcal{F}_{\de}$ in the $2\de$- neighborhood $ K_{2\de}$
of $K$ such that (\ref{ef11}) holds.
By the triangle inequality,  there is a constant $C$ such that for each $\de>0$  one has
\begin{equation}\label{est}
  \sup _{\substack { x,y \in K_{2\de}\\
|x-y|<\de }} | \mathcal{F}_{\de}(x)-\mathcal{F}_{\de}(y)| \leq C \|f\|_{\om}\;\om(\de).
\end{equation}
Differentiating the  Poisson formula for the harmonic function $\mathcal{F}_{\de}$ in the ball $B=B(x, \de) \subset K_{2\de}$,
 one clearly obtains
\[
\aligned
\nabla_t \mathcal{F}_{\de}(t)\bigg|_{t=x}
&=\nabla_t (\mathcal{F}_{\de}(t)-\mathcal{F}_{\de}(x))\bigg|_{t=x}\\
&= \frac{C_d}{\de}\int_{|y-x|=\de}\nabla_t\frac{\de^2-|x-t|^2}{|y-t|^d}\bigg|_{t=x}(\mathcal{F}_{\de}(y)-\mathcal{F}_{\de}(x)) dS(y),\\
\endaligned
\]
 where $dS(y)$ is the induced surface measure on $\partial B$.
Therefore, estimating the derivatives of the  Poisson kernel and using (\ref{est}), it holds
\[
\aligned
|\nabla \mathcal{F}_{\de} (x)|
\leq C\|f\|_{\om}\;\int_{|y-x|=\de} &\frac{\om(\de)}{\de^{d}} dS(y)\\
&\leq C\|f\|_{\om} \frac{\om(\de)}{\de},
\endaligned
\]
with a constant  $C$  independent of  $\de>0$ and $x \in  K_{\de}$. Thus, (\ref{ef2}) follows.
\begin{rem}
Observe, that   we  may prove  Proposition \ref{pr1}   for a discrete positive sequence  of reals  $\de_j \rightarrow 0$ with the constant  $C$ independent of $\de_j$. This easily gives  (\ref{ef11})   for all $\de >0$.
\end{rem}

\subsection{Porous sets}

Given a set $E \subset \rd$, then the Assouad dimension
$\dim_A (E)$ of $E$ (see, for instance, \cite{L}) is the infimum of all exponents $\lambda \geq 0$ for which there exists a constant $ C_{\lambda}$ such that for every ball $B(x,\; R)$ with the center $x \in E$ and radius $R$, and  for each $0 < r < R$  the set $E\cap B(x, R)$ can be covered by at most $\left(\frac{R}{r}\right)^{\lambda}$ balls of radius $r$.
 Also,  a set $E \subset \rd$ is porous if and only if
$\dim_ A(E) < d$.

 Instead of balls, also dyadic cubes could be used in the construction leading to the Assouad dimension. The dyadic formulation  is convenient from the point
of view of our proof.
The dyadic decomposition of each cube  $Q_0$ is
\[ \mathcal{D}(Q_0)= \bigcup_{j\geq 0} \mathcal{D}_j(Q_0),\]
where each $\mathcal{D}_j(Q_0)$ consists of the $2^{jd}$  pairwise disjoint (half-open) cubes $Q$, with side length
$\ell(Q) = 2^{-j}\ell(Q_0)$, such that
\[Q_0= \bigcup_{Q \in \mathcal{D}_j(Q_0)} Q\]
for every $j =0,1, \dots $. The cubes in  $\mathcal{D}(Q_0)$ are called dyadic cubes (with respect to $Q_0$).
 Let  $\mathrm{Card} \; ( \mathcal{A})$ denote  the number of all cubes  $Q$  from  a family $\mathcal{A}$.

For a compact  $K\subset\rd$, a cube  $R\subset\rd$  and   a  positive integer  $j$ define the covering of $R\cap K$  by pairwise disjoint  cubes
\[ {D}_{j, K}(R)=\big\{Q \in {D}_{j}(R):  \; Q  \cap K  \neq \emptyset\big\}.\]
Arguing as in \cite[Theorem 5.2]{L} where  Assouad dimension is defined,  one can prove the lemma.
\begin{lem}\label{l_cov}
  Given a porous set $K$, there exist two constants $0\leq \la < d$ and $C_{\la}>0$ such that for each cube $R \subset \rd$, for a positive integer $ j$, and the family $\mathcal{D}_{j, K}(R) $,
it holds
\[
\mathrm{Card}\;(\mathcal{D}_{j, K}(R))
 \leq C_{\la} 2^{j \lambda}.
\]
\end{lem}
\begin{proof}

 Let $c$ be  the porosity constant, and let $k_0$ be a  positive integer, such that
 $ 2 ^{-k_0} \leq c< 2 ^{-k_0 +1}$. Define $\theta = 2 ^{-k_0}$.   There is  a dyadic cube $Q \in \mathcal{D}_{k_0} (R)$ such that $Q \cap K =\varnothing$. Choose the constant  $0\leq \la< d$ such
$2^{k_0 d}-1= 2^{k_0 \la}$.
  Then $K \cap R$ is covered by at most
  \[
 \left  (\frac{\ell(R)}{\ell(Q)}\right )^d -1= 2^{k_0 d}-1 =  2^{k_0 \la}
  \]
 cubes from $\mathcal{D}_{k_0} (R) $.
By induction  for each integer $n>0$ we cover $K \cap R$ by
 at most  $2^{n k_0 \la}$ cubes from $\mathcal{D}_{nk_0} (R) $.

 Now, for an integer $j>0$  let $K \cap R$ be covered  by a family  of pairwise disjoint cubes from $\mathcal{D}_{j}(R)$.  Define the integer $n$ such that
  \[k_0\; n\leq j< k_0 (n+1). \]
  Clearly, for each cube $Q \in \mathcal{D}_{j, K}(R)$ there exists a cube
   $Q' \in
  \mathcal{D}_{(n+2)k_0, K} (R)$ such that
  $Q' \subset Q $.
     Therefore, the  number $\mathrm{Card}\;( \mathcal{D}_{j, K}(R))$ of  cubes from the family $ \mathcal{D}_{j, K}(R)$ is estimated above as follows
 \[
\aligned
 \mathrm{Card}\;( \mathcal{D}_{j, K}(R))
 & \leq \mathrm{Card} \;(\mathcal{D}_{(n+2)k_0, K} (R))\\
 &\leq 2^{(n+2) k_0 \la}\leq   2^{2k_0 \la}\; 2^{j \la},
 \endaligned
\]
that is required with the constant $C_{\la}=2^{2k_0\la} $.
\end{proof}

\section{Proof of Proposition \ref{pr1}}
\subsection{Partition of unity}  We construct the next partition of unity in $\rd$ (see for instance \cite[\S 7, Ch.VIII]{G}, \cite {V}).
Consider the disjoint family $\mathcal{D}_0$ of half open cubes $Q$ with vertexes in $\mathbb{Z}^d$ and edge size
$\ell(Q)=1$. Namely, these are  all cubes of the form
\[Q = [a_1; a_1+1)\times \dots   \times [a_d; a_d+1),\;\;a_1, \dots , a_d \in \mathbb{Z}.\]
Let
\[
\mathcal{D}_j= \bigcup_{Q \in \mathcal{D}_0} \mathcal{D}_j(Q).
\]
For an integer $j\geq 0$  consider a partition of unity   assigned to the dyadic  family $\mathcal{D}_j$ from Section 2.4. Namely, this   is the  family
$\Phi_j =\{\phi_Q, \;Q \in \mathcal{D}_j\}$ of bump functions  with the properties:
\begin{enumerate}
  \item $\phi_Q \in C^{\infty}_0(\rd)$;
  \item $supp \;\phi_Q \subset 2Q$, where $2Q$ is the dilated cube to $Q$ with the same center, and $\ell(2Q)=2\ell(Q)$;
  \item  \[\sum _{Q\in \mathcal{D}_j} \phi_Q  = \chi_{\rd};\]
  \item  there is a sequence of constants  $C_k>0$, $ k =0, 1, \dots$,  independent of   $Q$  and $\phi_Q $ such that
  \[
  \|\nabla^k\phi_Q\|_\infty \leq \frac{C_k}{\ell^k(Q)}.
  \]
\end{enumerate}
To construct $\Phi_j$ consider  a bump function
$\phi \in C^{\infty}_0(\rd)$ with $\int_{\rd} \phi (x) dx= 1$ and $supp\;\phi \subset B(0, 1/2)$. Define the sequence of constants as
\[C_k = \|\nabla^k\phi\|_\infty,\; k =0, 1, \dots.\]
 Then
the family
$\Phi_0= \{\phi_Q, \; Q \in \mathcal{D}_0\}$  of functions $\phi_Q (x)  =\int_{Q} \phi (x-y) dy$, $Q \in \mathcal{D}_0$, will be the claimed partition of unity assigned to $\mathcal{D}_0$ for $j=0$. Taking an integer $ j>0$ and  a cube $Q \in \mathcal{D}_j$ there is  the    unique cube $\widetilde{Q} \in \mathcal{D}_0$ dilated to $Q$  with respect to origin. Putting
\[\phi_{Q} (x)  = \phi_{\widetilde{Q} } (2^jx),\]
we clearly obtain, that  for the family $\Phi_j =\{\phi_Q\}$ all properties (1)--(4) hold.
\subsection{ Vitushkin type localization}
Let $f \in C^{\infty}_0(\rd)$ and let $\mathcal{E}(t)= C_d  |t|^{-d+2}$ be a fundamental solution of the Laplasian $\Delta$. Fix $j=0,1, \dots  $, then for the  partition of unity $\Phi_j=\{\phi_Q: \; Q \in \mathcal{D}_j\}$
 from  Section 3.1,  we have
  \[
\aligned
f(x)=\mathcal{E}\ast \Delta f
= \sum_{Q \in \mathcal{D}_j} \mathcal{E}\ast &(\phi_Q \Delta f )(x)
= \sum_{Q\in \mathcal{D}_j} V_{\phi_Q} f(x).
\endaligned
\]
 The  operator defined as
\[
\aligned
V_{\phi_Q} f (t)
=\quad \mathcal{E}\ast  (\phi_Q \Delta f )& (t)
 = C_d \int_{\rd} \frac{   \phi_Q(y) \Delta f(y) }{|t-y|^{d-2}} dy
\endaligned
\]
 with the constant $C_d$, is called the localization  assigned to $\phi_Q$ \cite{M96, M121, P}.

  Observe, that  since $f \in C^{\infty}_0(\rd)$,
  the localization $ V_{\phi_Q} f$ is well defined in the classical sense. Further, $ V_{\phi_Q} f$ is harmonic outside the support of $ \phi_Q $. Also, $ V_{\phi_Q} f=0$ if $\mathrm{supp}\, \phi_Q \cap \mathrm{supp} \,f= \varnothing$, and therefore,
  $V_{\phi_Q} f=0$ if $\mathrm{supp} \,\phi_Q \cap R_0 = \varnothing$.
  In what follows we will approximate each function $ V_{\phi_Q} f$ by a harmonic function in a   neighborhood of $K$. We have a modification of Lemma 5 \cite{P}.
\begin{lem}\label{lloc1}
  Let $f \in C^\infty_0(\rd)$ and  $\om$ be a modulus of continuity.   There is a constant $C$ depending on $d$  such that for each bump function $\phi_Q \in \Phi_j$, $Q \in \mathcal{D}_j, \;j=0,1, \dots $, and $\ell=\ell(Q)= 2^{-j}$, it holds
      \begin{equation}\label{eloc}
  \sup_{\rd}|V_{\phi_Q}f|\leq C \|f\|_{\om}\om ( \ell).
\end{equation}
\end{lem}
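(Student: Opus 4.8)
The plan is to estimate the localization $V_{\phi_Q}f(t) = C_d\int_{\rd} \phi_Q(y)\Delta f(y)\,|t-y|^{2-d}\,dy$ directly, exploiting a cancellation trick to convert the bound into an integral against $\om$. The key observation is that $\int_{\rd}\Delta(\phi_Q f)\,dy = 0$ (since $\phi_Q f$ is compactly supported), so we may freely subtract terms built from $\Delta(\phi_Q f)$; more precisely, expanding the Leibniz rule gives $\phi_Q\Delta f = \Delta(\phi_Q f) - f\Delta\phi_Q - 2\nabla\phi_Q\cdot\nabla f$, and each piece will be treated separately.

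First I would handle the term coming from $\Delta(\phi_Q f)$. Writing $g = \phi_Q f$, which is supported in $2Q$, we have $\mathcal{E}\ast\Delta g = g$, so this contribution to $V_{\phi_Q}f(t)$ is exactly $\phi_Q(t)f(t)$, trivially bounded by $\|f\|_\infty \le \|f\|_\om\,\om(\ell(R_0))$ — but that is not good enough; we need $\om(\ell)$ with $\ell=\ell(Q)$. The correct way is to \emph{not} split off $\Delta(\phi_Q f)$ but instead use the cancellation $\int\phi_Q(y)\Delta f(y)\,dy = -\int(\nabla\phi_Q\cdot\nabla f + \text{etc.})$; actually the cleanest route, following Lemma 5 of \cite{P}, is: pick a reference point $x_Q$ (the center of $Q$, say, or a nearby point of $K$ if $2Q\cap K\neq\varnothing$), and write
\[
V_{\phi_Q}f(t) = C_d\int_{\rd}\bigl(\phi_Q(y)\Delta f(y)\bigr)|t-y|^{2-d}\,dy .
\]
Integrating by parts twice moves both derivatives off $f$: since $\Delta f\cdot\phi_Q = \mathrm{div}(\phi_Q\nabla f) - \nabla\phi_Q\cdot\nabla f$ and then $\mathrm{div}(\phi_Q\nabla f) = \mathrm{div}(\phi_Q\nabla(f - c)) $ for any constant $c$, we can arrange that $f$ appears only through $f - f(x_Q)$, which on $2Q$ is $O(\|f\|_\om\,\om(\ell))$ because $\mathrm{diam}(2Q)\approx\ell$. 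After the integrations by parts the kernel becomes $\nabla^2_y|t-y|^{2-d}$-type singular integrals against $(f(y)-f(x_Q))\phi_Q(y)$ and its lower-order analogues against $\nabla\phi_Q$, $\nabla^2\phi_Q$; using property (4) of the partition of unity ($\|\nabla^k\phi_Q\|_\infty\lesssim\ell^{-k}$) and the fact that the $y$-integration is confined to $2Q$ which has volume $\ell^d$, every term is bounded by
\[
C\|f\|_\om\,\om(\ell)\cdot\ell^{-2}\cdot\int_{2Q}|t-y|^{2-d}\,dy \;\le\; C\|f\|_\om\,\om(\ell),
\]
where the last step uses that $\int_{2Q}|t-y|^{2-d}\,dy\lesssim\ell^2$ uniformly in $t$ (worst case $t$ near $2Q$; the $|t-y|^{2-d}$ singularity is integrable in dimension $d>2$, giving exactly $\ell^2$ by scaling). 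One must track the boundary terms from the integrations by parts, but since $\phi_Q\in C_0^\infty$ they vanish.

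The main obstacle, and the step I would be most careful about, is organizing the double integration by parts so that \emph{both} derivatives land in a place where we either gain the factor $\om(\ell)$ (by the $f - f(x_Q)$ substitution) or gain a factor $\ell^{-1}$ that is then compensated (from differentiating $\phi_Q$), while simultaneously keeping the $y$-integral over the fixed region $2Q$ and the kernel integrable. A clean bookkeeping device is to write $V_{\phi_Q}f(t)$ as a sum of three explicit integrals — one with $(f-f(x_Q))\phi_Q$ against $\nabla^2\mathcal{E}$, one with $(f-f(x_Q))\nabla\phi_Q$ against $\nabla\mathcal{E}$, one with $(f-f(x_Q))\nabla^2\phi_Q$ against $\mathcal{E}$ — verify this identity by reversing the integration by parts, and then bound each of the three by the scaling argument above. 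The doubling property of $\om$ is used implicitly to pass between $\om(\ell)$, $\om(2\ell)$, and $\om(\mathrm{diam}\,2Q)$; no porosity is needed for this lemma (it is purely local), which matches the fact that the statement of Lemma \ref{lloc1} makes no reference to $K$.
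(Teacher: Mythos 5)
Your approach is essentially the paper's: decompose $\phi_Q\Delta f$ by the product rule so that $f$ enters only through $f-c$ for a constant $c$, evaluate the $\mathcal{E}\ast\Delta(\phi_Q(f-c))$ piece exactly, and bound the remaining pieces as weakly singular integrals over $2Q$ using $\|\nabla^k\phi_Q\|_\infty\lesssim\ell^{-k}$ and $\int_{2Q}|t-y|^{2-d}dy\lesssim\ell^2$, $\int_{2Q}|t-y|^{1-d}dy\lesssim\ell$. The only substantive difference is the choice of constant: the paper subtracts $f(t)$ (the evaluation point), which kills the first term identically ($I_1=\phi_Q(t)(f(t)-f(t))=0$) but forces a preliminary reduction to $t\in 2Q$ via the maximum principle (for $t$ far from $2Q$, $\om(|y-t|)$ is not controlled by $\om(\ell)$); your fixed $x_Q$ leaves a nonzero but harmless first term $\phi_Q(t)(f(t)-f(x_Q))$ and makes the weakly singular bounds uniform in $t$ without the maximum-principle step. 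Either choice works.

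One step in your final bookkeeping is wrong as written and must be repaired by an idea you already state earlier. You propose to bound all three integrals, including the one pairing $(f-f(x_Q))\phi_Q$ against $\nabla_y^2|t-y|^{2-d}$, "by the scaling argument above." That kernel is $O(|t-y|^{-d})$, which is \emph{not} locally integrable, and $|f(y)-f(x_Q)|\lesssim\om(\ell)$ gives no decay at the singularity; with a general (non-Dini) modulus this integral does not converge absolutely and cannot be estimated by $\om(\ell)\,\ell^{-2}\int_{2Q}|t-y|^{2-d}dy$. The correct treatment of that term is the exact identity $\mathcal{E}\ast\Delta g=g$ for $g=\phi_Q(f-f(x_Q))\in C_0^\infty$, which you invoke at the start of your proposal but then abandon in the "three explicit integrals" paragraph; this is precisely how the paper disposes of its $I_1$. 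Relatedly, for the middle term (one derivative on the kernel) the integration by parts should be done on $\{|y-t|>\varepsilon\}$ and the boundary term on the small sphere shown to be $O(\varepsilon\,\om(\varepsilon)/\ell)\to0$, as the paper does; "the boundary terms vanish because $\phi_Q\in C_0^\infty$" only accounts for the outer boundary. With these two repairs your argument is complete.
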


\begin{proof}
The function $V_{\phi_Q}f$ is harmonic in   $\rd \backslash  2Q$ and vanishes at $\infty$, therefore by the maximum principle, it is sufficient to prove the estimate (\ref{eloc}) in $2Q$.

 By the elementary differentiating formula
\[fg''=(fg)''+ f''g-2(f'g)',\]
one has for $t \in 2Q$
  \[
\aligned
V_{\phi_Q}(f)(t)
&= \mathcal{E}\ast \phi_Q \Delta (f-f(t)) (t)\\
  & = \mathcal{E}\ast \Delta \big(\phi_Q (f-f(t))\big)(t)+ \mathcal{E}\ast (f-f(t)) \Delta \phi_Q (t)\\
  & -2 \sum_{i=1}^d \mathcal{E}\ast  \partial_{y_i}\big((f-f(t))\partial_{y_i}\phi_Q \big) (t)\\
   & = \;I_1\;+\; I_2\;+\;I_3.
   \endaligned
\]
For the first term $I_1$,  we easily have
\[
\aligned
I_1
=\mathcal{E}\ast \Delta \big(\phi_Q (f-f(t))\big)(t)
 = \phi_Q \big(f-f(t)\big)(t)=0.
 \endaligned
 \]
Secondly, term $I_2$ is an integral with the weak singularity and by property (4) from Section 3.1 of the bump function and by  the $\mathrm{Lip}_{\om}$-estimates of $f$, one has
  \[
  \aligned
|I_2|
& \leq C_d \int_{2Q} \frac{ | \Delta \phi_Q(y)| |f(y)-f(t)| }{|t-y|^{d-2}} dy\\
& \leq C_d  \|f\|_{\om} \frac{\om(\ell)  } { \ell^2 }  \int_{2Q} \frac{ dy }{|t-y|^{d-2}}
 \leq C_d  \|f\|_{\om}\;\om(\ell)
\endaligned
\]
uniformly for $t \in 2Q$ with a constant $C_d$ depending on $d$.

Finally, term $I_3$ is the sum of  integrals with the weak singularity
 \[
\aligned
I_{3,i}
&= C_d\;\mathcal{E}\ast \partial_{y_i} \bigg(\partial_{y_i}\phi_Q \big(f-f(t)\big)\bigg) (t)\\
&=  C_d \int_{\rd} \frac{\partial_{y_i}\big(\partial_{y_i}\phi_Q (y) (f(y)-f(t))\big)}{|t-y|^{d-2}}dy.
\endaligned
\]
We apply the Green formula to $I_{3,i}$ for each $i=1, \dots, d$.  So, integrating with respect to  the domain $ 2Q \cap \{ y: |y-t|>\varepsilon\}$ and  taking into account that $supp\;\phi_Q \subset 2Q$, we obtain
\[
\aligned
I_{3,i, \varepsilon}
& =C_d\int_{ \substack {y \in 2Q \\ |y-t|>\varepsilon}}  \frac{\partial_{y_i}\big(\partial_{y_i}\phi_Q(y) (f(y)-f(t))\big)}{|t-y|^{d-2}}dy\\
 &= -C_d\int_{ \substack {y \in 2Q \\ |y-t|>\varepsilon}} \partial_{y_i}\frac{1}{|t-y|^{d-2}} \partial_{y_i}\phi_Q(y) \big(f(y)-f(t)\big)dy \\
 & + C_d \int_{ |y-t|=\varepsilon} \frac{\partial_{y_i}\phi_Q(y) \big(f(y)-f(t)\big)}{|t-y|^{d-2}}  \cos(\nu, y_i) d S(y)\\
 & =I'_{3,i, \varepsilon} \;+ \;I''_{3,i, \varepsilon}
\endaligned
\]
where $dS(y)$ is the induced $d-1$-dimensional surface  measure,   and where  $\nu=\nu(y)$ denotes the outer normal  to  the  sphere $\{ y: |y-t|=\varepsilon\}$.   We estimate both the integrals, as follows
\[
\aligned
|I'_{3,i, \varepsilon}|\;
&\leq\; C_d \|f\|_{\om} \frac{\om(\ell)}{\ell}  \int_{  \substack {y \in 2Q \\ |y-t|>\varepsilon}}  \frac{1}{|t-y|^{d-1}} dy
\leq \;C_d  \|f\|_{\om} \;\om (\ell)
\endaligned
\]
and respectively,
\[ \aligned
|I''_{3,i, \varepsilon}|
&\leq C_d  \|f\|_{\om}\frac{\om(\varepsilon)}{\ell}\int_{ |y-t|=\varepsilon }   \frac{d S(y)}{\varepsilon^{d-2}}
  \leq C_d  \|f\|_{\om} \frac{\varepsilon\om(\varepsilon)}{\ell}
  \leq C_d  \|f\|_{\om} \om (\ell).
\endaligned
\]
Summing
\[| I_{3}|\leq C_d \sum_i | I_{3,i}| \leq  C_d  \|f\|_{\om}\;\om(\ell),\]
we complete the proof of (\ref{eloc}) with a constant $C_d$ depending on $d$.
\end{proof}

 \subsection{Taylor expansion of localization}

  Let
\[\mathcal{D}_{j,K}=\{Q \in \mathcal{D}_j: \:  Q \cap K \neq \emptyset\}\]
be a family of all dyadic cubes of edge size $\ell = 2^{-j}$ which cover $K$.
Let $\mathcal{D}'_{j,K} \supset \mathcal{D}_{j,K}$ be the family of those cubes $Q' \in  \mathcal{D}_j $ for which  there exists  a  neighbor cube $Q \in \mathcal{D}_{j,K}$. We call two dyadic cubes as  neighbors if their closures have non-empty intersection. Clearly, the set
$\bigcup\{Q';\; Q' \in \mathcal{D}'_{j,K}\}$
contains  a $2^{-j}$-neighborhood of
 $\bigcup\{Q;\; Q \in \mathcal{D}_{j,K}\}$.

     By the porosity condition, for each $Q \in \mathcal{D}_{j,K}$ one can  take a ball $B_Q \subset Q$ with the center $c_{B_Q}$, radius $r>0$ such that
   $r\geq c \ell$,  and  Euclidean distance   $ dist(B_Q, K) \geq 2r$,
   where $0<c< 1$ is a porosity constant.  The same is trivial for all cubes $Q \in \mathcal{D}_{j} \backslash \mathcal{D}_{j,K}$, since these cubes do not intersect $K$. Particularly, it holds for all $Q \in \mathcal{D}'_{j,K}$.

    For each ball $B_Q$,  $Q\in \mathcal{D}'_{j,K}$, consider the Taylor expansion of the fundamental solution  $\mathcal{E}(t-y)$ at $\infty$ with a pole in the center  $c_{B_Q}$ of ${B_Q}$. Namely, define a multi-index
     $\al \in \mathbb{Z}^d$, $\al =(\al_1,\dots, \al_d)$, $|\al|= \sum_k \al_k$, and $\al!=\al_1!\dots \al_d!$.
    We need  a modified  estimate from \cite{P}.
      \begin{lem}\label{ldif}
     There is a constant $C_d$ depending on dimension $d$ such that for $t =(t_1,\dots, t_d)\in \rd\backslash \{0\}$
     with the Euclidean norm $|t|$, it holds
   \begin{equation}\label{efe}
 \left |  \partial^{\al} \frac{1}{|t|^{d-2}}\right |  \leq \;C_d \; \al!\;|\al|^{\frac{d-1}{2}} \left ( \frac{2d}{|t|}\right)^{|\al| +d -2},
\end{equation}
   \end{lem}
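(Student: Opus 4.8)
The plan is to estimate $\partial^\alpha |t|^{-(d-2)}$ by combining the classical bound on derivatives of the Newtonian kernel with careful bookkeeping of the combinatorial factors. First I would recall the general fact that $|t|^{-(d-2)}$, being (up to constant) harmonic away from the origin, satisfies $|\partial^\alpha |t|^{-(d-2)}| \leq A_d^{|\alpha|} \alpha! \, |t|^{-(d-2+|\alpha|)}$ for some dimensional constant $A_d$; one clean way to obtain such a bound is via the Cauchy-type estimates for harmonic functions (differentiating the mean-value property over a ball of radius $|t|/2$), which immediately yields a bound of the form $C^{|\alpha|} |\alpha|! \, |t|^{-(d-2+|\alpha|)}$, and then one converts $|\alpha|!$ into $\alpha!$ times a multinomial coefficient bounded by $d^{|\alpha|}$. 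The extra polynomial factor $|\alpha|^{(d-1)/2}$ in \eqref{efe} is harmless slack; the real content is getting the geometric constant to be (a fixed multiple of) $(2d/|t|)^{|\alpha|}$ rather than something growing faster.

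The cleanest self-contained route, and the one I would actually carry out, is an explicit induction on $|\alpha|$. Writing $u(t) = |t|^{-(d-2)}$, one checks $\Delta u = 0$ for $t\neq 0$ and $\partial_k u = -(d-2) t_k |t|^{-d}$, so each first derivative is a sum of a bounded number of terms of the form (polynomial in $t$ of appropriate degree) times a negative power of $|t|$. I would set up the induction with hypothesis that $\partial^\alpha u$ is a finite linear combination, with controlled integer coefficients, of terms $t^\beta |t|^{-(d-2+|\alpha|+|\beta|)}$ with $|\beta| \le |\alpha|$, track how differentiation multiplies the coefficient count and the powers, and then bound each such term by $|t|^{-(d-2+|\alpha|)}$ since $|t^\beta| \le |t|^{|\beta|}$. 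The number of terms and the size of the coefficients produced after $|\alpha|$ differentiations is where the factors $\alpha!$, $d^{|\alpha|}$ and a low power of $|\alpha|$ enter; matching them against the stated right-hand side is the bookkeeping step. Alternatively, since the paper cites \cite{P} for a closely related estimate, I would simply invoke that the same argument there, applied verbatim with the modulus $\omega$ playing no role (this lemma is purely about the kernel), gives \eqref{efe}.

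The main obstacle is purely combinatorial: keeping the constant in the geometric factor pinned at $2d$ (or any explicit dimensional value that survives being raised to the power $|\alpha|+d-2$) rather than letting it blow up through the induction. Each application of the product rule to a term $t^\beta|t|^{-m}$ produces at most $d$ new monomials (from differentiating $t^\beta$, which lowers $|\beta|$ by one, and from differentiating $|t|^{-m}$, which raises it by one), and the numerical coefficients grow by at most a factor comparable to $m \le |\alpha|+d$. Iterating, the coefficient growth contributes $\prod_{k=1}^{|\alpha|}(k+d) \approx \alpha!\,$-type factors together with a $d^{|\alpha|}$, and the count of monomials contributes another $d^{|\alpha|}$; together these assemble into $C_d\,\alpha!\,|\alpha|^{(d-1)/2}(2d)^{|\alpha|+d-2}$ after absorbing constants, which is exactly \eqref{efe}. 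Once the term-by-term bound and the count are in hand, the final inequality follows by the triangle inequality, with no analytic subtlety remaining.
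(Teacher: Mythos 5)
Your proposal does not reproduce the paper's argument, and as written it has a genuine gap at exactly the point you yourself flag as ``the main obstacle'': pinning the base of the geometric factor at $2d$. Both of your routes lose a factor that is \emph{exponential} in $|\alpha|$, and such a factor cannot be ``absorbed'' into $C_d$. In the induction, by your own accounting the coefficient growth contributes $\prod_{k=1}^{|\alpha|}(k+d)\approx |\alpha|!\cdot\mathrm{poly}(|\alpha|)$, which costs a $d^{|\alpha|}$ when converted to $\alpha!$, and the term count contributes another exponential factor ($2^{|\alpha|}$ if you count carefully, $d^{|\alpha|}$ as you state it); moreover the exponent $m$ at step $k$ is as large as $d-2+2k$, so the coefficient product is really $\gtrsim 2^{|\alpha|}|\alpha|!$. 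Even in the most favorable reading this assembles into a base of at least $4d$, and in your reading into $d^{2}$, and $(d^{2})^{|\alpha|}\not\le C_d (2d)^{|\alpha|}$ for $d\ge 3$. The same problem afflicts the first route: iterating the mean-value gradient estimate over nested balls of total radius $|t|/2$ gives roughly $(Cd|\alpha|)^{|\alpha|}(2/|t|)^{|\alpha|}\approx (Cde)^{|\alpha|}|\alpha|!\,(2/|t|)^{|\alpha|}$, and passing from $|\alpha|!$ to $\alpha!$ costs another $d^{|\alpha|}$, so the base comes out as $Cd^{2}$, not $2d$. A version of the lemma with a larger explicit base $A_d$ would in fact suffice for Section 3.3 (one replaces the threshold $8\ell d\sqrt d$ by $4A_d\sqrt d\,\ell$ and the cube count is still $O_d(1)$), but that is not the inequality (\ref{efe}) as stated, and the sentence where the factors ``assemble into'' $(2d)^{|\alpha|+d-2}$ is precisely where the proof is missing. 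Invoking \cite{P} verbatim is also not available: the paper explicitly calls this a \emph{modified} estimate from \cite{P}.

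The paper's actual proof is different in kind and is designed to avoid exactly this loss: it complexifies, observing that $\mathcal{E}(t-z)$ extends holomorphically to $|z|<|t|/2$ in $\mathbb{C}^d$, and applies the Cauchy--Leray integral formula over the sphere $|w|=|t|/2$. Differentiating under that integral produces the bound $|\partial^{\alpha}\mathcal{E}(t)|\le C_d\,\frac{(d+|\alpha|-1)!}{(d-1)!}\,(2/|t|)^{d+|\alpha|-2}$ with a single explicit combinatorial factor and a clean base $2$ --- no $e^{|\alpha|}$ or $d^{|\alpha|}$ leakage. The one unavoidable $d^{|\alpha|}$, coming from the conversion $|\alpha|!\le C_d\,\alpha!\,|\alpha|^{(1-d)/2}d^{|\alpha|}$ (Stirling plus logarithmic convexity of $\Gamma$, which is also where the factor $|\alpha|^{(d-1)/2}$ originates), is then exactly what turns the base $2$ into $2d$. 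If you insist on a real-variable proof, you would need to collect like monomials $t^{\beta}$ (their number is only polynomial in $|\alpha|$) and bound the resulting coefficient recursion sharply; short of that, either adopt the complexification argument or weaken the constant in the statement and adjust the thresholds downstream.
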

  \begin{proof}
    Let $z \in \mathbb{C}^d$,
     $z=(z_1,\dots, z_d)$, where  $z_k=x_k + i y_k$, $k=1,\dots, d$, and  let  $|z|= (|z_1|^2+\dots + |z_d|^2)^{1/2}$.

    For each $t  \in \rd\setminus 0$,   the fundamental solution   $\mathcal{E}(t-x)=C_d|t-x|^{2-d}$ considered as the function  of   $x \in \rd\setminus t $, may be extended as holomorphic function
   $\mathcal{E}(t-z)$ for $z \in \mathbb{C}^d$ such that  $|z|< |t|/2$.

  By the  Cauchy-Leray integral formula \cite[\S 3.2]{Ru}, it holds
  \[ \mathcal{E}(t-z)= C_d \frac{|t|}{2}\int_{|w|=|t|/2} \frac{\mathcal{E}(t-w)}{(|t|^2/4 - \langle w, z\rangle)^{d}}dS(w), \]
 where for  $w=(w_1,\dots, w_d) \in \mathbb{C}^d$
 \[\langle w, z\rangle= \sum_{k=1}^{d} w_k \overline{z_k},\]
  and $dS$ is the induced surface measure on the $2d-1$-sphere $|w|= |t|/2$.
  Differentiating, one has
  \[
\aligned
\frac{\partial^{\al}\mathcal{E}(t)}{\partial t^{\al}}
&=(-1)^{|\al|}\frac{\partial^{\al}\mathcal{E}(t-x)}{\partial x^{\al}}\bigg|_{x=0}\\
&= (-1)^{|\al|}\frac{\partial^{\al}\mathcal{E}(t-z)}{\partial z^{\al}}\bigg|_{z=0}\\
&=     C_d(-1)^{|\al|} \frac{(d+|\al|-1)!}{(d-1)!} \frac{|t|}{2}\int_{|w|=|t|/2} \frac{\mathcal{E}(t-w)\;\overline{w^{\al}}}{(|t|^2/4 - \langle w, z\rangle)^{d+|\al|}}d S(w)\bigg|_{z=0}\\
&=     C_d (-1)^{|\al|}\frac{(d+|\al|-1)!}{(d-1)! } \left (\frac{|t|}{2}\right)^{1-2|\al|-2d}\int_{|w|=|t|/2} \mathcal{E}(t-w)\;\overline{w^{\al}}\;dS(w).
\endaligned
\]
Therefore, estimating  the above integral
\[\int_{|w|=|t|/2} \mathcal{E}(t-w)\;\overline{w^{\al}}\;dS(w) \leq C_d \left (\frac{2}{|t|}\right)^{|\al|+d+1},\]
we have
\begin{equation}\label{efac}
  \bigg|\partial^{\al}\frac{1}{|t|^{d-2}}\bigg |
\leq
C_d \frac{(d+|\al|-1)!}{(d-1)!} \left (\frac{2}{|t|}\right)^{d+|\al|-2}.
\end{equation}
Now, by Stirling's approximation of  the Euler $\Gamma $-function, it holds
 \[ (d+|\al|-1)! \leq C_d |\al|! \;|\al|^{d-1}.\]
and
\[
\aligned
|\al|!
& =\Gamma (|\al|+1)  \leq C_d \left (\frac{|\al|}{e}\right )^{|\al|}\sqrt{2\pi |\al|}.
\endaligned
\]
Combining the same argument with
the  logarithmic convexity of the $\Gamma $-function  implies
\[
\aligned
\al!
 =\Gamma (\al_1+1)\dots \Gamma (\al_d+1)
 & \geq \Gamma^d \big(|\al|/\;d +1\big) \\
&  \geq C_d \left (\frac{|\al|}{e\;d}\right )^{\frac{|\al|}{d}d} \big(\sqrt{2\pi |\al|/d}\big)^d.
\endaligned
\]
Therefore, we have
\[
\aligned
|\al|!
  \leq C_d \;\al!\;|\al|^{\frac{1-d}{2}}\;d^{ |\al|}.
\endaligned
\]
Substituting the result in    (\ref{efac}), we complete the proof of the lemma.
  \end{proof}
 By  Lemma \ref{ldif},  the Taylor series
  \[
  \frac{1}{|t-y|^{d-2}}= \sum _{|\al|\geq 0 } \frac{(-1)^{|\al|}}{\al!} (y-c_{B_Q})^{\al}  \partial^{\al} \frac{1}{|t-c_{B_Q}|^{d-2}}
  \]
 converges  uniformly if  $4 d|y-c_{B_Q}|< |t-c_{B_Q}|$.
Hence whenever we take $y \in 2Q$ and obtain 
 \[|y-c_{B_Q}| \leq  2 \ell \sqrt{d},\]
 then for 
 \[|t-c_{B_Q}|> 8 \ell d \sqrt{d} \]
 we  can  represent uniformly each localization $V_{\phi_Q}f$,  $\;Q \in \mathcal{D}_{j}$
  as follows
\begin{equation}\label{etay}
 V_{\phi_Q}f (t)= \sum _{|\al|\geq 0 }C_{\al, Q} \partial^{\al} \frac{1}{|t-c_{B_Q}|^{d-2}} ,
 \end{equation}
where
\[
\aligned
C_{\al,Q}
&= C_d \frac{(-1)^{|\al|}}{\al!} \int_{2Q}\phi_Q(y) (y-c_{B_Q})^{\al} \Delta f(y) dy\\
& = C_d \frac{(-1)^{|\al|}}{\al!} \int_{2Q}\Delta \big((\phi_Q(y) (y-c_{B_Q})^{\al} \big)\big(f(y)-f(c_{B_Q})\big)dy.
\endaligned
\]
Estimating   the derivatives  of the bump function $\phi_Q$ by property (4) from Section 3.1, we obtain
\begin{equation}\label{ebq0}
  |C_{\al,Q}| \leq C_d \frac{1}{\al!} \|f\|_{\om}\;\om(\ell)(2\ell \sqrt{d})^{d+|\al|-2},
\end{equation}
where a constant $C_d$ depends on $d$ and is independent of parameters  $\al$, $\ell$ and $\|f\|_{\om}$.
 \subsection{Approximation function construction}
For each $Q \in \mathcal{D}'_{j,K}$ we define a harmonic function in $\rd\backslash c_{B_Q} $ as the next finite sum
\[
F_{Q}(t)=\frac{C_{0,Q}}{|t-c_{B_Q}|^{d-2}}+  \sum _{|\al|=1}C_{\al,Q}  \partial^{\al} \frac{1}{|t-c_{B_Q}|^{d-2}}.
\]
Clearly, $F_{Q}$ has a pole in the center $c_{B_Q}$ of the ball $B_Q \subset Q$. By (\ref{ebq0}) and (\ref{efe}),
\[
\aligned
 \sup_{ K} |F_{Q}(t)|
  &\leq C\sup_{K} \|f\|_{\om}\;\om(\ell)\left(\frac{\ell^{d-2}}{|t-c_{B_Q}|^{d-2}}+\frac{\ell^{d-1}}{|t-c_{B_Q}|^{d-1}}\right)\\
  &\leq C \|f\|_{\om}\;\om(\ell)
\endaligned
\]
with the constant  $C$ independent of $Q$ and $ \|f\|_{\om}$.

For each scale parameter $\ell=2^{-j}$ define the  function
\[F_{\ell}=\sum_{Q\in  \mathcal{D}'_{j,K}} F_{Q}, \]
which is harmonic  in $\rd$  except the  finite number of poles in centers $ c_{B_Q}$ of  balls $B_Q$, $B_Q \subset Q  \in \mathcal{D}'_{j,K}$. Recall,  that all  poles $ c_{B_Q}$ are separated from $K$ by at least  than $ 2c\ell$ uniformly with respect to  $Q \in  \mathcal{D}'_{j,K}$, provided $c$ is a porosity constant. Particularly, it holds  that $F_{\ell} $ is harmonic in the  $2c\ell$- neighborhood of $K$.
\subsection{Proof of Proposition   \ref{pr1}}
Let us consider  $f \in C^{\infty}_0(\rd)$  and expand it as a finite sum of the  Vituschkin localizations from Section 3.2
\[
\aligned
f(x)
&=\sum_{Q\in \mathcal{D}_j} V_{\phi_Q} f(x)\\
&=\sum_{Q\in \mathcal{D}'_{j,K}} V_{\phi_Q} f(x) + \sum_{Q\in  \mathcal{D}_j\backslash \mathcal{D}'_{j,K}} V_{\phi_Q} f(x)\\
&= \quad I_{\ell}(x) \quad + \quad O_{\ell}(x).
\endaligned
\]
Clearly,  if $Q_1 \in \mathcal{D}_{j,K}$ and $Q_2 \notin \mathcal{D}'_{j,K}$, then interiors of $2Q_1$ and $2Q_2$  have empty intersection. Therefore, $\mathcal{O}_{\ell}(x)$
 is harmonic in the interior of $\bigcup \{2Q:\;Q \in \mathcal{D}_{j,K}\}$, which contains the $\ell$-  neighborhood
  $ K_{\ell}$ of the compact $K$.

Therefore, it remains to approximate $I_{\ell}$. For this we apply  the function  $F_{\ell}$, constructed in Section 3.4,  and estimate the difference
 \[
\aligned
I_{\ell}(t)-F_{\ell}(t)
&=  \sum_{Q\in \mathcal{D}'_{j,K}} \big(V_{\phi_Q} f(t) - F_{Q}(t)\big)  \\
\endaligned
\]
for $t \in K_{c\ell}$, provided $ K_{c\ell}$ is the   $c\ell$- neighborhood of $K$, and $c$ is the porosity constant.

 Put $t \in K_{c\ell}$, and split the sum according to whether for each ball $B_Q$  with the center $c_{B_Q} $ the distance $|t-c_{B_Q}|$  is less or greater than $ 8\ell d \sqrt{d}  $:
\[
\aligned
I_{\ell}(t)
&-F_{\ell}(t)\\
&=  \sum_{\substack { Q \in \mathcal{D}'_{j,K}\\
 |t-c_{B_Q}|\leq 8 \ell d \sqrt{d} }} \big(V_{\phi_Q} f(t) - F_{Q}(t)\big)
+  \sum_{\substack {  Q \in\mathcal{D}'_{j,K}\\
 |t-c_{B_Q}|>8\ell d \sqrt{d} }}\big( V_{\phi_Q} f(t) - F_{Q}(t)\big)\\
&= \qquad I_1 \qquad+\qquad I_2.\\
\endaligned
\]
        We start with term $I_1$. By the size property (\ref{eloc}) of $V_{\phi_Q} f $ and the similar  property in Section 3.4 of $F_Q$, we have

\[
\aligned
 |I_1|
 &\leq\sum_{\substack {  Q \in \mathcal{D}'_{j,K}\\
  |t-c_{B_Q}|\leq 8 \ell d \sqrt{d} }} |V_{\phi_Q} f (t)- F_{Q}(t)|\\
 & \leq   \sum_{\substack {  Q \in \mathcal{D}'_{j,K}\\
  |t-c_{B_Q}|\leq 8\ell d \sqrt{d} }} |V_{\phi_Q} f(t)|+ \sum_ {\substack {  Q \in \mathcal{D}'_{j,K}\\
  |t-c_{B_Q}|\leq 8 \ell d \sqrt{d} }} |F_{Q} (t)|\\
 & \leq C  \sum_{\substack {  Q \in \mathcal{D}'_{j,K}\\
  |t-c_{B_Q}|\leq 8 \ell d \sqrt{d} }}\|f\|_{\om}\; \om(\ell)\\
 &  \leq C\|f\|_{\om}\; \om(\ell) \sum_{\substack {  Q \in \mathcal{D}'_{j,K}\\
  |t-c_{B_Q}|\leq 8 \ell d \sqrt{d} }} 1,
    \endaligned
\]
where $C$ depends on $K$.
The last sum equals to  the number of cubes  of the family
\[ \{Q \in \mathcal{D}'_{j,K}:\;|t-c_{B_Q}|\leq 8 \ell d\sqrt{d} \},\]
which is easily estimated above  by the   constant $C_d$ depending on dimension  $d$ uniformly with respect to  $t \in K_{c\ell}$ and $\ell>0$. Thus,
  \[
  |I_1|\leq C_d \|f\|_{\om}\;\om(\ell),
  \]
 which is required for term $I_1$.

The estimate of the second sum is more complicated. For $V_{\phi_Q} f$ we use the Taylor formula (\ref{etay}) when $ |t-c_B|>8 \ell d\sqrt{d} $,  and estimate the sum of remainders
\[
\aligned
I_2
&=\sum_{\substack {  Q \in \mathcal{D}'_{j,K}\\
 |t-c_{B_Q}|>8 \ell d \sqrt{d} }} \big(V_{\phi_Q} f (t)- F_{Q}(t)\big)\\
&=\sum_{\substack  {  Q \in \mathcal{D}'_{j,K}\\
 |t-c_{B_Q}|>8 \ell d \sqrt{d} }} \left( \sum _{|\al|\geq 0 }C_{\al, Q} \partial^{\al} \frac{1}{|t-c_{B_Q}|^{d-2}}  -
\sum _{|\al|\leq1}C_{\al,Q}  \partial^{\al} \frac{1}{|t-c_{B_Q}|^{d-2}}\right)\\
&=\sum_{\substack {  Q \in \mathcal{D}'_{j,K}\\
 |t-c_{B_Q}|>8 \ell d \sqrt{d} }} \sum _{|\al|\geq 2 }C_{\al, Q} \partial^{\al} \frac{1}{|t-c_{B_Q}|^{d-2}}.\\
\endaligned
\]
The outer sum above is finite, while the inner series  converges uniformly with respect to  $ |t-c_{B_Q}|>8 d \sqrt{d} \ell$. Estimating  $C_{\al, Q}$ by (\ref{ebq0}), while    $\partial^{\al} \frac{1}{|t-c_{B_Q}|^{d-2}}$ is estimated by (\ref{efe}), we have
\[
\aligned
|I_2|
&\leq C \|f\|_{\om}\om(\ell)\sum_{\substack {  Q \in \mathcal{D}'_{j,K}\\
 |t-c_{B_Q}|>8 \ell d \sqrt{d} }} \quad\sum _{|\al|\geq 2 } |\al|^{\frac{1+d}{2}} \left (\frac{4\ell d \sqrt{d}}{|t-c_{B_Q}|}\right )^{d +|\al|-2}.
\endaligned
\]
Again applying $ |t-c_{B_Q}|>8 d \sqrt{d} \ell$, we estimate the inner series as 
\[
\aligned
\sum _{|\al|\geq 2 } |\al|^{\frac{1+d}{2}} \left (\frac{4\ell d \sqrt{d}}{|t-c_{B_Q}|}\right )^{d +|\al|-2}
&=\left (\frac{4\ell d \sqrt{d}}{|t-c_{B_Q}|}\right )^{d}
\sum _{|\al|\geq 2 } |\al|^{\frac{1+d}{2}} \left (\frac{4\ell d \sqrt{d}}{|t-c_{B_Q}|}\right )^{|\al|-2}\\
&\leq  \left (\frac{4\ell d \sqrt{d}}{|t-c_{B_Q}|}\right )^{d }
\sum _{|\al|\geq 2 } |\al|^{\frac{1+d}{2}} 2^{2-|\al|}\\
&\leq C_d  \left (\frac{4\ell d \sqrt{d}}{|t-c_{B_Q}|}\right )^{d }
\endaligned
\]
uniformly with respect to $t$, $ |t-c_{B_Q}|>8 \ell d \sqrt{d} $.

Thus, substituting the above estimate of the inner series into the outer sum for  $I_2$, we obtain
\[
|I_2| \leq C_{d} \|f\|_{\om}\om(\ell)\sum_{\substack {  Q \in \mathcal{D}'_{j,K}\\
 |t-c_{B_Q}|>8 \ell d \sqrt{d}}}  \left (\frac{4\ell d \sqrt{d}}{|t-c_{B_Q}|}\right )^{d }
 \]
with the constant $C_d$.

For each positive integer $k$  define the set
\[
\mathcal{Q}_k=\big\{Q\in \mathcal{D}'_{j,K}:
 |t-c_{B_Q}|\leq  2^{k} \ell d \sqrt{d}  \big\},
\]
and consider the dyadic splitting of the family of cubes
\[
 \{Q\in \mathcal{D}'_{j,K}: \; |t-c_{B_Q}|>8 \ell d \sqrt{d}  \}  = \bigcup_{k\geq 3} \mathcal{Q}_{k+1} \backslash \mathcal{Q}_k .
 \]
Then it holds
\[
\aligned
|I_2|
&\leq  C_{d,K} \|f\|_{\om} \om(\ell)\;\sum_{k\geq 3}\;\sum_{Q\in \mathcal{Q}_{k+1} \backslash \mathcal{Q}_k } \left (\frac{4\ell d \sqrt{d}}{2^{k} \ell\;d \sqrt{d} }\right )^d\\
&\leq  C_{d,K} \;\|f\|_{\om}\;\om(\ell) \;\sum_{k\geq 3}\;\sum_{Q\in \mathcal{Q}_{k+1} \backslash \mathcal{Q}_k } \frac{1}{2^{(k-2)d}}\\
&\leq  C_{d,K}\;\|f\|_{\om}\;\om(\ell)\;\sum_{k\geq 3} \frac{1}{2^{kd}}\;\sum_{Q \in \mathcal{Q}_{k+1} \backslash \mathcal{Q}_k } 1.
\endaligned
\]
Clearly the  inner sum equals to
the number of cubes  $\mathrm{Card} (Q \in \mathcal{Q}_{k+1} \backslash \mathcal{Q}_k ) $, and it is estimated above by the
  number of cubes $\mathrm{Card} (Q \in \mathcal{Q}_{k+1} ) $.

  Also, for each $Q \in \mathcal{Q}_{k+1} \subset \mathcal{D}'_{j,K}$ there exists a   neighbor cube  $Q'\in \mathcal{D}_{j,K}$. For this   cube $Q'$ one has $|t-c_{B_{Q'}}|\leq 2^{k+2} \ell d \sqrt{d} $.
Observing that  each cube $Q' \in \mathcal{D}_{j,K}$ has at most $3^d-1$   neighbor cubes $Q \in \mathcal{D}'_{j,K}$, we can estimate
$\mathrm{Card}\;(Q \in \mathcal{Q}_{k+1})$
by $3^d$ times the number of  cubes
\[
\mathrm{Card}\;\big(Q \in \mathcal{D}_{j,K}: |t-c_{B_{Q'}}|\leq 2^{k+2} \ell d\sqrt{d}\; \big ).
\]
Thus, for each integer $k\geq 3$
 we obtain
\[
\aligned
\sum_{Q\in \mathcal{Q}_{k+1} \backslash \mathcal{Q}_k} 1
& = \mathrm{Card}\;(Q \in \mathcal{Q}_{k+1} \backslash \mathcal{Q}_k)\\
& \leq \mathrm{Card}\;(Q \in \mathcal{Q}_{k+1})\\
&\leq 3^d \mathrm{Card}\; \big(Q \in \mathcal{D}_{j,K}: |t-c_{B_{Q}}|\leq 2^{k+2} \ell d\sqrt{d}\; \big ).
\endaligned
\]
To estimate the last quantity, take the minimal positive integer $m=m(d)$ such that $2^m> d \sqrt{d}$, and
observe that  the ball $B(t, 2^{k+2} \ell d\sqrt{d})$ with the center $t$ and radius $2^{k+2} \ell d\sqrt{d}$  is contained in a cube $\widetilde{Q}$ with the edge size $2^{k+3+m} \ell $. Applying  Lemma \ref{l_cov} to the cube
$\widetilde{Q}$, we  have
\begin{equation}\label{last}
  \sum_{Q\in \mathcal{Q}_{k+1} \backslash \mathcal{Q}_k} 1  \leq 3^d   C_{\lambda}\left (\frac{2^{k+3+m} \ell}{\ell}\right )^{\lambda}  \leq 3^d   C_{\lambda} 2^{(k+3+m)\lambda},
\end{equation}
where the  constants $0< \la <d$ and  $ C_{\lambda} >0$  depend  on the porosity constant $0<c <1$ of $K$.

 Then  (\ref{last})  with the inequality  $0< \lambda < d$ gives
\[
\aligned
|I_2|
&\leq C_{\lambda} 3^d 2^{(3+m)\lambda}\|f\|_{\om}\;\om(\ell) \sum_{k\geq 1} \frac{1}{2^{dk}}\;2^{\lambda k}\\
&\leq  C\|f\|_{\om}\;\om(\ell), \\
\endaligned
\]
 where the constant
 \[C= C_{\lambda} 3^d 2^{(3+m)\lambda} \frac{2^{\lambda }}{2^{d}-2^{\lambda}} \]
  depends on $\la$ and $d$.

With the estimates of $I_1$ and $I_2$ in hand, we are ready to finish the proof of the estimate (\ref{ef11}). For each scale parameter $\ell =2^{-j}$  we obtained   the  function $ \mathcal{F}_{\ell}=F_{\ell}+ O_{\ell}$ which is  harmonic  within  the  specific  neighborhood $K_{\de}$ of the compact $K$, where $\de= \min (2c\ell, \ell)$. This function is the claimed harmonic approximation of the function $f$ in the smaller neighborhood $K_{\de/2}$ of $K$. After the designation,    the proof of Proposition  \ref{pr1} and consequently of Theorem \ref{t2} is completed.

\bibliographystyle{amsplain}

\end{document}